\documentclass{article}

\usepackage{amssymb}
\usepackage{amsmath}

\usepackage{todonotes}

\newcommand{\Z}{\mathbb{Z}}
\newcommand{\G}{\mathcal{G}}

\newtheorem{theorem}{Theorem}
\newtheorem{conjecture}{Conjecture}

\title{Pythagorean walks on $\Z^2$}

\author{Jan Willemson (ORCID 0000-0002-6290-2099)\\
Cybernetica\\
Narva mnt 20, Tartu, 51009, Estonia\\
\texttt{jan.willemson@cyber.ee}}

\begin{document}

\maketitle

\begin{abstract}
	We consider an infinite graph with the vertex set $\Z^2$ and edges connecting the vertices iff the Euclidean distance between the respective points is an integer, and the points do not lie on the same horizontal or vertical. Equivalently, there must exist a Pythagorean triangle with the hypotenuse corresponding to the graph edge and the legs parallel to the axes.
	
	We prove that the diameter of this graph is $3$, but surprisingly it appears that the nodes at the maximal (graph) distance of $3$ apart seem to be only those that are geometrically very close to each other. It also appears that the paths of length $2$ connecting geometrically close nodes may need to go through geometrically very distant points.
	
	We prove a general relation that generates infinite series of length-$2$ paths, and present the results of our computer experiments. We conclude the paper with a general conjecture about the length-$2$ and length-$3$ paths. We have posed this conjecture to several of the current leading AI models. Remarkably, none of them managed to make any significant progress in proving it.
\end{abstract}

\textbf{Keywords}: Graphs, Pythagorean walks 

\newpage

Consider the plane integer lattice $\Z^2$. Allowing only integer-length steps between the lattice points, it is trivially possible to reach every point from any other in at most two steps: we have to make at most one horizontal and at most one vertical move.

However, the situation becomes much more interesting when we prohibit horizontal and vertical moves. Formally, let us consider the undirected graph $\G$ with the vertex and edge sets being respectively
\begin{align*}
    V(\G)&=\{(x,y):x,y\in\Z\}\,,\\
    E(\G)&=\{(A(x_1,y_1),B(x_2,y_2)): |AB|\in\Z, x_1\ne x_2, y_1\ne y_2\}\,,
\end{align*}
and ask what is the shortest path between every pair of vertices.

Since any translation by an integer coordinate vector is clearly an automorphism of $\G$, we can fix an arbitrary vertex as the initial one. Let us select the origin $O(0,0)$ as our universal starting point.

Let us denote the length of the shortest path from $A$ to $B$ as $d(A,B)$, and let $d(\G)$ be the diameter of $\G$, i.e. the length of the longest path possible between the nodes of $\G$ (assuming it exists).

In one step we can of course reach from $O$ exactly such vertices $(x,y)$ that $\sqrt{x^2+y^2}\in\Z$ and $xy\ne0$. From this description it is clear that if there exists an edge from $O$ to $(x,y)$, there also exists symmetrical edges from $O$ to the vertices $(\pm x,\pm y)$ and $(\pm y, \pm x)$.

Getting from $(0,0)$ to $(1,1)$ requires two steps. As $\sqrt{1^2+1^2}$ is not an integer, one step is not enough. However, two steps suffice:
\[
    (0,0) \xrightarrow{(4,-3)} (4,-3) \xrightarrow{(-3,4)} (1,1)\,.
\]

To get from $O(0,0)$ to $A(1,0)$, one needs at least three steps. As the vertices $O$ and $A$ are not connected by an edge, more than one step is required. If there would be a lattice point $P$ such that $|OP|$ and $|AP|$ are integers, the triangle inequality implies that 
\[
    ||OP|-|AP||\leq |OA|=1\,.
\] 

The equality $||OP|-|AP||=1$ would imply that $P$ lies on the same line (i.e. the $x$-axis) with $O$ and $A$, but the graph does not have the corresponding edges $(O,P)$ and $(A,P)$.  

On the other hand, if $||OP|-|AP||=0$, the triangle $OAP$ would be isosceles, so $P$ would need to lie on the perpendicular bisector of $OA$, i.e. on the line $x=\frac{1}{2}$. Thus it can not have integer coordinates, and we conclude that there is no two-step-path.

The three-step-path from $(0,0)$ to $(1,0)$ can be organized in multiple ways, for example
\[
    (0,0) \xrightarrow{(9,12)} (9,12) \xrightarrow{(-12,-9)} (-3,3) \xrightarrow{(4,-3)} (1,0)\,.
\]

It turns out that three steps are always enough.

\begin{theorem}\label{thm:dG3}
    $d(\G)=3$.
\end{theorem}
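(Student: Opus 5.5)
The lower bound $d(\G)\ge 3$ is already settled by the discussion preceding the theorem: $d(O,(1,0))=3$. The task is therefore the upper bound, namely that every target vector $(a,b)\in\Z^2$ is a sum of at most three \emph{Pythagorean steps}. By a Pythagorean step I mean a vector $(u,v)$ with $uv\ne 0$ and $u^2+v^2$ a perfect square. By the symmetries $(\pm x,\pm y)$, $(y,x)$ noted above, I may assume $a\ge b\ge 0$.

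The plan is to build a stock of two-step vectors lying on the axes and then finish with one further step. The basic construction is: whenever $x$ and $z$ are both legs paired with the same other leg $y$, the two steps $(x,y)$ and $(z,-y)$ sum to $(x+z,0)$, and replacing $z$ by $-z$ gives $(x-z,0)$. Taking $z=x$ and using that every integer $x$ with $|x|\ge 3$ is a leg, via $(x,(x^2-1)/2)$ for odd $x$ and $(x,x^2/4-1)$ for even $x$, I obtain $(2x,0)$ for all $|x|\ge3$. To get odd horizontal vectors I will use a common partner leg $y$ that has many representations. For example $y=12$ pairs with $5,9,16,35$, which yields $(21,0)$, $(25,0)$, $(-11,0)$, and so on. The intermediate lemma I aim for is that the two-step set $H$ of axis vectors contains $(n,0)$ for all $n$ outside a small explicit finite set. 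Its proof would combine the even construction with a parametric family handling odd $n$: I would choose $y=2st$ with several factorizations so that $s^2-t^2$ takes values of both parities, and then add even shifts.

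For the final step, I write $(a,b)=(c,b)+(a-c,0)$ with $(c,b)$ a Pythagorean step and $(a-c,0)\in H$. When $b\ne0$ and $|b|\ge3$ there are infinitely many legs $c$ partnering $b$, obtained by scaling primitive triples and taking divisor pairs of $b^2=(h-c)(h+c)$. So I can choose $c$ far enough away that $a-c$ avoids the finite exceptional set of $H$. The case $b=0$ is handled symmetrically, using the vertical analogue of $H$ and a first step $(a,d)$, or directly if $a\ge3$ via a short explicit path. The remaining cases $|b|\le2$, and similarly $|a|\le 2$, reduce to finitely many residue and size patterns. I would settle these by explicit paths in the style of the paper's example $(0,0)\to(9,12)\to(-3,3)\to(1,0)$, together with a small computer check.

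The main obstacle is parity. In any Pythagorean triple exactly one leg is even, so if $b$ is odd then every partner $c$ is even. Consequently $a-c$ has the parity of $a$, and when $a$ is odd the construction genuinely requires odd elements of $H$. The heart of the proof is therefore showing that all sufficiently large odd $|n|$ lie in $H$, that is, that $n=x\pm z$ for two legs sharing a partner. The first thing I would try is the explicit families $x=m^2-1$ and $z=$ a suitable divisor-generated leg of $y=2m$, parametrized so that $x\pm z$ sweeps every odd residue. Failing that, I would try an alternative decomposition $(a,b)=S_1+S_2+S_3$ in which $S_2+S_3$ is diagonal or otherwise non-axial. I would then verify all the small exceptional cases by direct search.
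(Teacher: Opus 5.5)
There is a genuine gap: the lemma you yourself call the heart of the proof is never proved. Your whole route needs $(n,0)$ to be a sum of two Pythagorean steps for all sufficiently large odd $|n|$. For this you only list constructions you would try, with no argument that any of them reaches every odd $n$. Without it, the parity obstruction you correctly identify leaves essentially all targets with $a$ and $b$ both odd unhandled (two legs cannot both be odd, so for odd $b$ every partner $c$ is even).

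The final step is also wrong as written. A fixed leg $b$ has only finitely many partner legs, since $(h-c)(h+c)=b^2$ has finitely many factorizations and forces $|c|<b^2/2$. So you cannot pick $c$ ``far enough away'' from the exceptional set of $H$. It can be repaired by using both $c$ and $-c$, since $a-c$ and $a+c$ differ by $2|c|$. But that only works once the exceptional set is known to lie in an interval of length less than $2|c|$, which again depends on the missing lemma. (Minor: ``exactly one leg is even'' fails for non-primitive triples such as $(6,8,10)$; ``at least one'' is what your parity argument needs.)

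The idea you are missing is closure under scaling: if $(u,v)$ is a step, so is $(ku,kv)$ for every integer $k\neq0$, because $k^2(u^2+v^2)$ is a square. Hence for any three steps $A,B,C$, every combination $kA+\ell B+mC$ is a sum of at most three steps, and it suffices to find three steps whose $\Z$-span is $\Z^2$. The paper takes $A=(3,4)$, $B=(4,3)$, $C=(4,-3)$. Then $A-B=(-1,1)$ and $C$ form a basis of $\Z^2$ (determinant $-1$), and $(x,y)=(3x+4y)A-(3x+4y)B+(x+y)C$. This avoids parity, exceptional sets and computer search entirely.

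Your route is completable. Theorem~\ref{thm:n0} shows that $(n,0)$ is a two-step vector for every $n\ge3$: for odd $n$, the triples $(n^2+\frac{n-1}{2},\,n^3-n,\,n^3-\frac{n-1}{2})$ and $(n^2-\frac{n+1}{2},\,n^3-n,\,n^3-\frac{n+1}{2})$ share a leg and have first legs differing by $n$. After that, the $\pm c$ trick (every leg has $|c|\ge3$, so $|2c|\ge6$ exceeds the spread of $\{\pm1,\pm2\}$) and symmetry leave only $|a|,|b|\le2$ to check by hand. But this proves a much harder intermediate statement than the theorem requires.
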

\textit{Proof.} We have to prove that every node can be reached from the origin in no more than three steps. 

We start from a simple observation that there is an edge from $O$ to $(x,y)$ in $\G$, there is also an edge from $O$ to $(kx,ky)$ for every $k\in\Z\setminus\{0\}$. Indeed, if $x^2+y^2$ is a perfect square, then so is the number $(kx)^2+(ky)^2=k^2(x^2+y^2)$, and if $x,y\ne0$, the same also holds for $kx$ and $ky$. Thus, every such node $(kx,ky)$ is reachable from $O$ in one step (and $O$ itself is reachable in zero steps).

More generally, if there are two nodes $A(x_1,y_1)$ and $B(x_2,y_2)$ such that $(O,A),(O,B)\in E(G)$, all the linear combinations of the form
\[
    k(x_1,y_1)+\ell(x_2,y_2)\quad(k,\ell\in\Z)
\]
are reachable from $O$ in at most two steps. Similarly, for every three nodes $A(x_1,y_1)$, $B(x_2,y_2)$ and $C(x_3,y_3)$ such that 
$(O,A),(O,B),(O,C)\in E(G)$, all the linear combinations of the form
\[
    k(x_1,y_1)+\ell(x_2,y_2)+m(x_3,y_3)\quad(k,\ell,m\in\Z)
\]
are reachable from $O$ in at most three steps.

Thus, it is sufficient to find three edges $(O,A),(O,B),(O,C)\in E(\G)$ such that the $\Z$-linear combinations of the corresponding vectors span the whole $\Z^2$. For that, in turn, it is sufficient to find three nodes so that the respective $\Z$-linear span contains the vectors $(1,0)$ and $(0,1)$. 

We will show that we can choose the nodes $A(3,4)$, $B(4,3)$ and $C(4,-3)$. First it is clear that the corresponding edges $(O,A),(O,B),(O,C)$ belong to $E(\G)$. The construction of $(1,0)$ was actually already given above:
\[
    3\cdot(3,4)-3\cdot(4,3)+1\cdot(4,-3)=(1,0)\,.
\]
On the other hand, one can readily verify that
\[
    4\cdot(3,4)-4\cdot(4,3)+1\cdot(4,-3)=(0,1)\,.
\]
Putting it all together, for any $(x,y)\in V(\G)$, we can write
\begin{align*}
	(x,y)&=x\cdot(0,1)+y\cdot(0,1)=\\
		 &=(3x+4y)\cdot(3,4)-(3x+4y)\cdot(4,3)+(x+y)\cdot(4,-3)\,.
\end{align*}
\hfill$\square$

It turns out that for majority of the nodes $A(x,y)\in V(\G)$, the distance between $O$ and $A$ is actually no more than $2$. The only exceptions seem to be (geometrically very close!) nodes $(1,0)$, $(2,0)$ and $(2,1)$ (together with their symmetrical counterparts).

We have already considered $(1,0)$. Let prove the distance claim for the remaining two cases as well.

\begin{theorem}
    $d(O(0,0),A(2,0))=3$, $d(O(0,0),B(2,1))=3$.
\end{theorem}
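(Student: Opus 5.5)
The upper bound $d\le 3$ for both pairs comes directly from Theorem~\ref{thm:dG3}. The lower bound needs two facts: there is no edge, and there is no path of length $2$. No edge from $O$ is immediate, since $A(2,0)$ lies on a horizontal and $\sqrt{5}\notin\Z$ for $B$. So the real work is to exclude a common neighbour $P(x,y)$. Such a $P$ must have integer distances $a=|OP|$ and $b=|AP|$ (resp.\ $b=|BP|$). It must also satisfy the non-axis conditions $x\ne0$, $y\ne0$, and $x\ne2$, $y\ne0$ (resp.\ $x\neq 2$, $y\ne1$). I will follow the same template as for $(1,0)$: bound $|a-b|$ by the triangle inequality, then use the identity $a^2-b^2=(a-b)(a+b)$ with the explicit difference of squared distances.

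For $A(2,0)$, the triangle inequality gives $|a-b|\le 2$. If $|a-b|=2$, then $P$ is collinear with $O$ and $A$, hence $y=0$, which is excluded. If $a=b$, then $P$ lies on $x=1$, and $1+y^2$ would have to be a nonzero-$y$ perfect square, which is impossible. If $|a-b|=1$, then $a^2-b^2=x^2-(x-2)^2=4x-4$ gives $a+b=|4x-4|$, which is even. But $a+b\equiv a-b\equiv 1 \pmod 2$, a contradiction.

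For $B(2,1)$, the triangle inequality gives $|a-b|\le\sqrt5$, so $|a-b|\le 2$. Here $a^2-b^2=4x+2y-5$ is odd. This forces $a-b$ and $a+b$ to be both odd, so $a-b=d\in\{\pm1\}$ and $a+b=d(4x+2y-5)$. Solving for $a$ gives $a=d(2x+y-2)$. Squaring and using $a^2=x^2+y^2$ yields the Diophantine equation
\[
3x^2+4xy-8x-4y+4=0,\qquad\text{i.e.}\qquad 4y(x-1)=-(3x-2)(x-2).
\]
The value $x=1$ gives no solution. For $x\neq1$, put $u=x-1$, so the right-hand side becomes $-(3u+1)(u-1)=-(3u^2-2u-1)$. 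Divisibility by $u$ forces $u\mid 1$, so $x\in\{0,2\}$. These give $P=(0,1)$ and $P=(2,0)$, and both violate the non-axis conditions. Hence no length-$2$ path exists, and $d=3$ in both cases.

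The main obstacle is the $(2,1)$ case with $|a-b|=1$. A parity argument alone does not finish it, since both $a-b$ and $a+b$ are odd and consistent. The step that closes it is to eliminate $a$ and reduce everything to a single quadratic in $x,y$ whose integer solutions can be listed by a divisibility argument. One must then check that the few surviving points all lie on forbidden horizontals or verticals.
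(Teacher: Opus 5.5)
Your proof is correct and follows the paper's argument. For $A(2,0)$ you use the same triangle-inequality bound, three-way case split and parity reasoning; for $B(2,1)$ you reach the same Diophantine equation $3x^2+4xy-8x-4y+4=0$, whose only integer solutions $(0,1)$ and $(2,0)$ are excluded by the axis conditions. The differences are local streamlinings: the odd value of $a^2-b^2=4x+2y-5$ disposes of the paper's separate perpendicular-bisector ($|a-b|=0$) and parity ($|a-b|=2$) cases at once and gives $a=\pm(2x+y-2)$ linearly rather than by squaring twice, and you solve the equation via $x-1\mid 1$ instead of the paper's discriminant condition that $y^2-y+1$ be a perfect square.
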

    \textit{Proof.}
    In both of the cases, the corresponding pairs $(O,A)$ and $(O,B)$ do not belong to $E(\G)$, so it remains to prove that there are no paths of length $2$. 
    
    Assume first that there exists a lattice point $P$ such that $OP$ and $AP$ are integers. Since $|OA|=2$, the triangle inequality this time gives us 
	\[
	||OP|-|AP||\leq |OA|=2\,.
	\] 
	Similarly to the proof of Theorem~\ref{thm:dG3}, the case $||OP|-|AP||=2$ would imply that the point $P$ lies on the $x$-axis, so there would be no edges $(O,P)$ and $(A,P)$.
	
	The case $||OP|-|AP||=0$ would again mean that the triangle $OAP$ is isosceles with $|OP|=|AP|$. Thus, the vertex $P$ should lie on the line $x=1$, having the coordinates $(1,y)$. It must have $y\ne0$ (as otherwise there would again be no edges $(O,P)$ and $(A,P)$). But now we get
	\[
		|OP|=\sqrt{x^2+y^2}=\sqrt{1^2+y^2}
	\]
	that can not be an integer for any integer $y\ne0$. (WLOG we can assume $y\geq1$, and then $y^2+1$ lies strictly between $y^2$ and $(y+1)^2$.)
	
	Thus $||OP|-|AP||=1$ is the only case remaining to analyse. Denote $P(x,y)$ and consider the expressions for the segment lengths:
	\[
		|OP|=\sqrt{x^2+y^2}\,,\quad |AP|=\sqrt{(x-2)^2+y^2}\,.
	\]
	As $x^2$ and $(x-2)^2$ have the same parity, so do $x^2+y^2$ and $(x-2)^2+y^2$, and consequently also $|OP|$ and $|AP|$ (as these must be integers!). Hence they can not differ by $1$, showing that the desired point $P$ does not exist.
	
    Consider now a lattice point $P$ and assume that $OP$ and $BP$ are integers. The triangle inequality this gives us 
	\[
	||OP|-|BP||\leq |OB|=\sqrt{5}\approx2.236\,.
	\] 
    As $||OP|-|BP||$ has to be an integer, its possible values are $0$, $1$ and $2$.

    If $||OP|-|BP||=0$, the triangle  $OBP$ is isosceles with $|OP|=|BP|$. Thus, the vertex $P$ should lie on the perpendicular bisector of the segment $OB$. This bisector has the equation $y=-2x+2.5$, and hence contains no points with integer coordinates. 
	
	In case $||OP|-|BP||=1$, denote $P(x,y)$ and consider the expressions for the respective segment lengths:
	
	\begin{align*}
		\left|\sqrt{x^2+y^2}-\sqrt{(x-2)^2+(y-1)^2}\right|&=1\,,\\
		\left(\sqrt{x^2+y^2}-\sqrt{(x-2)^2+(y-1)^2}\right)^2&=1\,,\\
		x^2+y^2-2\sqrt{x^2+y^2}\cdot\sqrt{(x-2)^2+(y-1)^2}+(x-2)^2+(y-1)^2&=1\,.
	\end{align*}
	Opening the parentheses and organising the terms gives
	\[
		2x^2+2y^2-4x-2y+4=2\sqrt{x^2+y^2}\cdot\sqrt{(x-2)^2+(y-1)^2}\,.
	\]
	Squaring both sides again and simplifying leaves us with
    \begin{equation}\label{eq:OPBP1}
        3x^2+4xy-8x-4y+4=0\,.
    \end{equation}
    Consider it as a quadratic equation in $x$:
    \[
        3x^2+4(y-2)x+(-4y+4)=0\,.
    \]
    When its solution $x$ is an integer, the discriminant 
    \[
        (4(y-2))^2-4\cdot3\cdot(-4y+4)=16y^2-16y+16=16(y^2-y+1)\,.
    \]
    must be a perfect square, i.e. $y^2-y+1$ must be a perfect square. This is only possible if $y=0$ or $y=1$ (in which cases $y^2-y+1=1$). If $y\geq2$, we have strict inequalities 
    \[
        (y-1)^2<y^2-y+1<y^2\,,
    \]
    so $y^2-y+1$ lies between two consecutive full squares. If $y<0$, we can consider $y'=-y>0$, obtaining a similar result for the expression $y'^2+y'+1$, which lies strictly between $y'^2$ and $(y'+1)^2$.

    If $y=0$, the equation~(\ref{eq:OPBP1}) becomes
    \[
        3x^2-8x+4=0
    \]
    with the only integer solution being $x=2$. However, there is no edge between the points $O(0,0)$ and  $P(2,0)$ in $\G$, so we get no solutions here.
    
    If $y=1$, the equation~(\ref{eq:OPBP1}) becomes
    \[
        3x^2-4x=0
    \]
    with the only integer solution being $x=0$. Again, there is no edge between the points $O(0,0)$ and  $P(0,1)$ in $\G$, so we get no solutions from here either.

    It remains to consider the case $||OP|-|BP||=2$, which gives the equation 
    \[
        \left|\sqrt{x^2+y^2}-\sqrt{(x-2)^2+(y-1)^2}\right|=2
    \]
    for $P(x,y)$. Now the parity of $x^2$ and $(x-2)^2$ is the same, but the parities of $y^2$ and $(y-1)^2$ differ. Thus the parities of $\sqrt{x^2+y^2}$ and $\sqrt{(x-2)^2+(y-1)^2}$ differ as well. It follows that their difference can not be $2$, completing the proof.

~\hfill$\square$

Now the really interesting (and currently open) question remains -- are there any other, substantially different pairs of vertices having the distance $3$ between them in graph $\G$?

We conducted extensive computer experiments, and the results suggest that the answer to this question might surprisingly be no. We do not have a complete construction at this point; however, we can prove distance $(\leq)2$ for certain infinite families of pairs of vertices of $\G$. 

\begin{theorem}\label{thm:gh}
	If $(a,b,c)$ is a Pythagorean triple, then for any node $(g,h)\in V(\G)$ ($g,h\ne0$) such that
	\[
		(c\pm a)g=(c\pm b)h -1
	\]
	there exists a path of length $2$ from $(0,0)$ to $(g,h)$ in $\G$.
\end{theorem}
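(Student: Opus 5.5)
The plan is to route the path through a point on the line spanned by the Pythagorean vector itself: take
\[
P = k(a,b),\qquad k\in\Z\setminus\{0\}.
\]
By the observation in the proof of Theorem~\ref{thm:dG3}, $(O,P)$ is then automatically an edge, with $|OP|=|k|c$. So everything reduces to choosing $k$ so that $|P\,(g,h)|$ is an integer, and then checking that $P$ and $(g,h)$ differ in both coordinates.

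For the second edge, expand
\[
(g-ka)^2+(h-kb)^2 = g^2+h^2-2k(ag+bh)+k^2c^2 .
\]
I want this to equal a perfect square of the form $(kc-m)^2$ for an integer $m$ that is linear in $g,h$. That requirement is equivalent to
\[
2k\bigl(cm-ag-bh\bigr)=m^2-g^2-h^2 .
\]
The hypothesis suggests the choice $m=h-g$. Then
\[
cm-ag-bh=(c-b)h-(c+a)g,
\]
which equals $1$ exactly under the relation $(c+a)g=(c-b)h-1$. The right-hand side becomes $(h-g)^2-g^2-h^2=-2gh$. Hence $k=-gh$, an integer, and it is nonzero since $g,h\neq0$. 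The path is then
\[
O\to P=-gh(a,b)\to(g,h),
\]
with $|OP|=|gh|c$ and $|P\,(g,h)|=|-ghc-h+g|$, both integers.

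The other sign combinations in the statement are handled the same way, by replacing $(a,b)$ with $(\pm a,\pm b)$, or with the swapped triple $(b,a)$, and choosing $m=\pm h\mp g$ accordingly. Each resulting relation $(c\pm a)g=(c\pm b)h-1$ makes the bracket $cm-ag-bh$ equal to $1$, and again $k=-gh$ up to sign. I would tabulate these four cases rather than redo the algebra for each.

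The remaining step, which I expect to be the only delicate one, is nondegeneracy: the edge $(P,(g,h))$ must not be horizontal or vertical. This needs
\[
g+gha=g(1+ha)\neq0 \quad\text{and}\quad h(1+gb)\neq0 .
\]
Since $g,h\neq0$, and every leg of a Pythagorean triple has $|a|,|b|\ge3$, neither $ha=-1$ nor $gb=-1$ is possible; the same argument covers the sign variants. I would also check that $(O,P)$ is a genuine edge, which holds because $ka\neq0$ and $kb\neq0$. This completes the length-$2$ path.
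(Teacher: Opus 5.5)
Your proposal is correct and is essentially the paper's proof. Your intermediate point $-gh(a,b)$ and your second step $(g+agh,\,h+bgh)$, of length $|cgh+h-g|$, are exactly the two Pythagorean vectors $(agh,bgh)$ and $(agh\pm g,\,bgh\pm h)$ that the paper combines, traversed in the opposite order; you handle the sign variants by changing the signs of $a,b$ rather than by appealing to the symmetry $(g,h)\leftrightarrow(\pm g,\pm h)$, and your explicit check that $g(1+ha)\neq0$ and $h(1+gb)\neq0$ (so neither edge is horizontal or vertical) is a step the paper leaves implicit.
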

\textit{Proof.} If $(a,b,c)$ is a Pyhtagorean triple, then so is $(agh,bgh,cgh)$. We will prove that, under the relationship assumed in the theorem, 
\[
	(agh\pm g, bgh\pm h, cgh+(h-g))
\]
is also a Pythagorean triple. Combining the edges corresponding to these triples we are able to get from $(0,0)$ to some $(\pm g, \pm h)$ in two steps, and we already know that this is equivalent to being able to get to $(g,h)$.

Let us consider the equation needed for the stated triple to be Pythagorean, and transform it into an equivalent form.
\begin{align*}
	(agh\pm g)^2+(bgh\pm h)&\stackrel{?}{=}(cgh+(h-g))^2\,,\\
	(agh)^2\pm2ag^2h+g^2+(bgh)^2\pm2bgh^2+h^2&\stackrel{?}{=}(cgh)^2+2cgh(h-g)+h^2-2gh+g^2\,,\\ 
	\pm2ag^2h\pm2bgh^2&\stackrel{?}{=}2cgh(h-g)-2gh\,,\\
	\pm ag\pm bh &\stackrel{?}{=} c(h-g)-1\,,\\
	(c\pm a)g&\stackrel{?}{=}(c\pm b)h -1\,,
\end{align*}
and the latter is exactly the relation given in the statement of the theorem.

~\hfill$\square$

Theorem~\ref{thm:gh} allows us to generate many series of length-$2$ paths. For example, when $(a,b,c)=(4,3,5)$, the theorem implies that all the nodes $(g,h)$ with $(5-4)g=(5-3)h-1$ or $g=2h-1$ are achievable from $(0,0)$ in (at most) two steps. This includes the walks to $(1,1),(3,2),(5,3)$, etc.  

Similarly, when $(a,b,c)=(3,4,5)$, we get the equation $2g=h-1$, giving rise to length-$2$ walks from $(0,0)$ to $(1,3),(2,5),(3,7)$, etc. Of course, we can also get the equation $(5+3)g=(5-4)h-1$ or $8g=h-1$; the triple $(8,15,17)$, on the other, hand gives  $(17-8)g=(17-15)h-1$ or $9g=2h-1$, etc.

Two relatively simple special cases are presented in the next theorems.

\begin{theorem}\label{thm:n0}
	There exists a length-$2$ path from $(0,0)$ to $(n,0)$ for every $n\geq3$.
\end{theorem}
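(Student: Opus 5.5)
The plan is to reduce the statement to finding two Pythagorean triangles that share a leg parallel to the $y$-axis. A length-$2$ path $O\to P\to(n,0)$ with $P=(x,y)$ exists iff $y\neq0$, $x\notin\{0,n\}$, and both $x^2+y^2$ and $(x-n)^2+y^2$ are perfect squares. So I need two Pythagorean triangles with a common vertical leg $y$ whose horizontal legs $x$ and $x'=x-n$ differ by exactly $n$ (signs allowed).

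Two preliminary reductions come first. As in the proof of Theorem~\ref{thm:dG3}, scaling a path by $k$ shows that if $(m,0)$ is reachable in two steps, so is $(km,0)$. Hence it suffices to treat a generating set of $n$, for instance $n=4$ and all odd $n\ge3$.

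The even case is mostly the isosceles trick. If $n=2m$ with $m\ge3$, take $P=(m,y)$ with $(m,y)$ a Pythagorean pair. Such a pair exists because every $m\ge3$ is a leg: use $(m,(m^2-1)/2)$ for odd $m$ and $(m,m^2/4-1)$ for even $m$. The case $n=4$ is handled directly by $P=(9,12)$, since $9^2+12^2=15^2$ and $5^2+12^2=13^2$.

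For odd $n$ I parametrize the legs through factorizations of $y^2$. Choose $y$ even and write $y^2=d\cdot e$ with $d,e$ even; this gives a triangle with horizontal leg $x_d=(y^2/d-d)/2$. Taking $d_1=2a$ and $d_2=2a+2$, a short computation gives
\[
x_{d_1}-x_{d_2}=\frac{y^2}{d_1d_2}+1 .
\]
So it suffices to have $y^2=(n-1)\,d_1d_2=4(n-1)a(a+1)$, i.e. $(n-1)a(a+1)$ must be a perfect square. The check $n=3$, $y=24$, $a=8$ gives $P=(10,24)$, with $7^2+24^2=25^2$, confirming the mechanism.

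To solve $(n-1)a(a+1)=\square$ I set $a=(n-1)s^2$ and require $a+1=r^2$. This is the Pell equation $r^2-(n-1)s^2=1$, which has a nontrivial solution whenever $n-1$ is not a perfect square. I would also check that the resulting $x$ and $x-n$ are nonzero, which should be routine.

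The main obstacle is odd $n$ with $n-1=m^2$ a square (e.g.\ $n=5,17,37$), where the Pell step collapses. If such $n$ is composite, the scaling reduction applies to an odd prime factor $p\ge3$. For $p=3$ the example above already works; for larger $p$ one has $p\nmid n$ unless $p=n$ anyway. Thus the real remaining case is primes $n=m^2+1$.

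For these I would first try the general gap $d_2-d_1=2t$, which yields the difference $t\,(y^2/(d_1d_2)+1)$. For prime $n$ this forces $t=1$, so the second attempt is the \emph{sum} configuration, where $P$ lies between $0$ and $n$ and $x_{d_1}+x_{d_2}=n$. That leads to a similar norm-form equation, hopefully with a Pell coefficient that is not a square. Explicit small examples, e.g.\ $P=(-12,16)$ for $n=5$ via $12,16,20$ and $17,16$ (not integral, so a different search is needed), would guide the right choice. Alternatively I would fall back on Theorem~\ref{thm:gh} with a suitable triple, which reduces the question to solving a linear relation with $h=0$ excluded.
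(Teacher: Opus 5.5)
Your reductions, the even case and the Pell construction are correct. With $a=(n-1)s^2$ and $a+1=r^2$ you get $P=\bigl((n-2)a+n-1,\,2(n-1)rs\bigr)$ and $x-n=(n-2)a-1\ge 1$. So every odd $n$ with $n-1$ not a square is settled.

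\textbf{The gap.} The case you flag yourself, odd $n$ with $n-1=m^2$, is never proved. After reducing to an odd prime factor it still leaves the primes $5,17,37,101,\dots$, and the proposal gives no construction for them. Inside your gap-$2$ scheme they are impossible: you need $m^2a(a+1)$ to be a square, i.e.\ $a(a+1)$ a square, and this fails for every $a\ge 1$. The escapes you suggest do not close the gap:
\begin{itemize}
\item Your claim that for prime $n$ a general gap $d_2-d_1=2t$ ``forces $t=1$'' silently assumes $y^2/(d_1d_2)$ is an integer. Nothing requires that.
\item Theorem~\ref{thm:gh} needs $g,h\neq 0$, so it never reaches $(n,0)$.
\item The ``sum configuration'' is only a hope, with no working example even for $n=5$.
\end{itemize}

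\textbf{The missing idea.} Drop integrality of $y^2/(d_1d_2)$. For odd $n$ take $y=n^3-n$, $d_1=(n-1)^2(n+1)$ and $d_2=n^2(n-1)$. Both are even, with even cofactors $n^2(n+1)$ and $(n-1)(n+1)^2$. Then $t=(n-1)/2$ and $y^2/(d_1d_2)=(n+1)/(n-1)$, so your own formula gives $x_{d_1}-x_{d_2}=\frac{n-1}{2}\cdot\frac{2n}{n-1}=n$. Here $x_{d_1}=n^2+\frac{n-1}{2}$ and $x_{d_2}=n^2-\frac{n+1}{2}$. These are exactly the paper's triples $\bigl(n^2+\frac{n-1}{2},\,n^3-n,\,n^3-\frac{n-1}{2}\bigr)$ and $\bigl(n^2-\frac{n+1}{2},\,n^3-n,\,n^3-\frac{n+1}{2}\bigr)$. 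They treat all odd $n\ge 3$ uniformly; for $n=5$ they give $P=(27,120)$ with hypotenuses $123$ and $122$. This makes both the Pell step and the case split unnecessary.
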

\textit{Proof.} 

First observe that for $n=4$ we have a path 
\[
(0,0) \xrightarrow{(9,12)}(9,12) \xrightarrow{(-5,-12)}(4,0)\,.
\]

Now let $n\geq3$ be odd. One can readily verify that
\[
    \left(n^2+\frac{n-1}{2},n^3-n,n^3-\frac{n-1}{2}\right)
    \quad\text{and}\quad
    \left(n^2-\frac{n+1}{2},n^3-n,n^3-\frac{n+1}{2}\right)
\]
are Pythagorean triples, with the first components differing by $n$ and second components being equal, hence allowing us to construct a path from $(0,0)$ to $(n,0)$. 

Now consider any integer $n\geq3$. We can present it in the form $n=2^km$, where $m$ is either $4$ or odd, and $k\geq0$. Thus, we can first have a construction for the path from $(0,0)$ to $(m,0)$, and then scale it by factor $2^k$. This concludes the proof.

~\hfill$\square$

\begin{theorem}\label{thm:n2n}
	There exists a length-$2$ path from $(0,0)$ to $(n,2n)$ for every $n\geq2$.
\end{theorem}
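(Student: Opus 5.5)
The plan is to imitate the proof of Theorem~\ref{thm:n0}: first reduce to a small set of base cases by scaling, then handle each base case with an explicit pair of Pythagorean vectors. We need vectors $P=(x_1,y_1)$ and $Q=(x_2,y_2)$ with $x_1^2+y_1^2$ and $x_2^2+y_2^2$ perfect squares, all four coordinates nonzero, and $P+Q=(n,2n)$. Scaling works as before: if $P+Q=(m,2m)$ is such a decomposition, then $kP+kQ=(km,2km)$ is one as well. So it suffices to treat $n=2$ together with every odd $n\geq3$; every $n\geq2$ is then of the form $2^k m$ with $m$ odd and $m\geq3$, or $m=2$.

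Theorem~\ref{thm:gh} cannot be used directly. With $(g,h)=(\pm n,\pm 2n)$, the condition $(c\pm a)g=(c\pm b)h-1$ would force $n\mid 1$. An explicit construction is therefore needed.

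For $n=2$, the plan is a finite computer search for Pythagorean vectors $P$ with $|P|$ small such that $(2,4)-P$ is also Pythagorean with nonzero coordinates. The short candidates $(\pm3,\pm4)$, $(\pm6,\pm8)$, $(\pm5,\pm12)$ fail, so the witness will probably be fairly large. That fits the paper's remark that paths between nearby points can pass through distant nodes.

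For odd $n$, I would look for a polynomial family in $n$, as in Theorem~\ref{thm:n0}. There the two triples shared a leg of size about $n^3$ and had first legs differing by $n$. It is convenient to work in the Gaussian integers. A vector is Pythagorean exactly when it has the form $k\,w^2$ up to multiplication by a unit, with $k\in\Z$ and $w\in\Z[i]$. So we need
\[
k_1w_1^2+k_2w_2^2=n(1+2i).
\]
The idea is to fix $w_1,w_2$ as linear polynomials in $n$ with small constant coefficients, for instance $w_1=n+\alpha+\beta i$ and $w_2=n+\gamma+\delta i$, and to take $k_1,k_2$ as low-degree polynomials in $n$. Comparing real and imaginary parts gives a small polynomial system in the unknown coefficients. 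I would solve it, using $n$ odd so that halves such as $\frac{n\pm1}{2}$ are integers, exactly as in Theorem~\ref{thm:n0}. Alternatively, one can fix the shape $P=(u(n),v(n))$ and $Q=(n-u(n),2n-v(n))$ with polynomial hypotenuses $c_1(n),c_2(n)$. Matching the two identities $u^2+v^2=c_1^2$ and $(n-u)^2+(2n-v)^2=c_2^2$ coefficient by coefficient reduces to
\[
c_2^2-c_1^2=5n^2-2n(u+2v).
\]
This can be arranged by taking $c_2-c_1$ to be a small constant multiple of $n$. Once such a family is found, the remaining work is the routine check that both triples are Pythagorean and that no coordinate vanishes for $n\geq3$.

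The main obstacle is finding the right ansatz for odd $n$. Theorem~\ref{thm:n0} suggests cubic legs, and I would first try degree $3$ in $n$. If no solution appears, I would restrict to odd primes $p$. Scaling by odd factors then covers all odd $n$ from the prime cases, and $p=3,5$ can be handled by search if the family starts later. If even that fails, I would try families that depend on the residue of $n$ modulo $4$.
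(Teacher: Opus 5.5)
\textbf{There is a genuine gap: the proposal never exhibits a single witness.}

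Your reduction by scaling to the cases $n=2$ and odd $n\ge3$ is correct and is exactly the paper's. You are also right that Theorem~\ref{thm:gh} cannot be applied directly, since it would force $n\mid 1$.

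Your argument is constructive, so its entire content is the explicit witnesses, and none are given. For $n=2$ you describe a search without reporting its result. For odd $n$ you list trial shapes while conceding that finding the right one is ``the main obstacle''. Falling back to odd primes does not reduce the difficulty, because infinitely many primes still need a uniform family. As written, nothing has been proved.

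The sample Gaussian ansatz you name also cannot work with constant $k_j$. Take $w_j=n+\alpha_j+\beta_j i$. The $n^2$-coefficients force $k_2=-k_1$ up to a unit. Then $k_1(w_1-w_2)(w_1+w_2)=n(1+2i)$ with $w_1+w_2=2n+\mathrm{const}$ requires $2k_1(w_1-w_2)=1+2i$ up to a unit. This is impossible, because $|2k_1(w_1-w_2)|^2$ is divisible by $4$ while $|1+2i|^2=5$.

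\textbf{The missing witnesses.} For $n=2$ the paper uses $(77,-36)+(-75,40)=(2,4)$, with both steps of length $85$.

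For odd $n\ge3$ the family is quadratic in $n$, not cubic. In your Gaussian-integer language it comes from writing $n(1+2i)$ as a difference of squares with $w_1-w_2=i$ and $w_1+w_2=(2-i)n$, that is,
\[
w_1=n+\tfrac{1-n}{2}\,i,\qquad w_2=n-\tfrac{n+1}{2}\,i,\qquad w_1^2-w_2^2=(1+2i)n.
\]
Here $w_1,w_2\in\Z[i]$ precisely because $n$ is odd, and their leading coefficient in $n$ is $1-\tfrac{i}{2}$, not $1$. Taking $P=w_1^2$ and $Q=-w_2^2$ gives
\[
P=\Bigl(\tfrac{(3n-1)(n+1)}{4},\,-n(n-1)\Bigr),\qquad Q=\Bigl(-\tfrac{(3n+1)(n-1)}{4},\,n(n+1)\Bigr).
\]
These satisfy $|P|=\tfrac{5n^2-2n+1}{4}$ and $|Q|=\tfrac{5n^2+2n+1}{4}$, all coordinates are nonzero for $n\ge3$, and $P+Q=(n,2n)$. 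They are exactly the two triples in the paper's proof. Since $|Q|-|P|=n$, the normalization $c_2-c_1=n$ in your second ansatz is the right one.
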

\textit{Proof.} 
First observe that for $n=2$ we have a path 
\[
(0,0) \xrightarrow{(77,-36)}(77,-36) \xrightarrow{(-75,40)}(2,4)\,.
\]
Now let $n\geq3$ be odd. One can readily verify that
\[
\left(n^2-n+2-\left(\frac{n-3}{2}\right)^2,n^2-n, n^2+\left(\frac{n-1}{2}\right)^2\right)
\]
and
\[
\left(n^2-2n+2-\left(\frac{n-3}{2}\right)^2,n^2+n, n^2+\left(\frac{n+1}{2}\right)^2\right)
\]
are Pythagorean triples, with the first components differing by $n$ and second components differing by $2n$, hence allowing us to construct a path from $(0,0)$ to $(n,2n)$. 

Now consider any integer $n\geq2$. We can present it in the form $n=2^km$, where $m$ is either $2$ or odd, and $k\geq0$. Thus, we can first have a construction for a path from $(0,0)$ to $(m,2m)$, and then scale it by factor $2^k$. This concludes the proof.

~\hfill$\square$

We have conducted computer search considering all the Pythagorean triples of the form $(d(m^2-n^2),2dmn,d(m^2+n^2))$ for $1\leq m,n,d<600$ and $\gcd(m,n)=1$ (the general formula for all Pythagorean triples, see~\cite{burton07}, Theorem 12.1). We studied all the nodes $(g,h)\in V(\G)$, where $0\leq g,h<4000$. The only trips for which we did not manage to establish graph distance $\leq2$ from $(0,0)$ are the examples we already know, i.e. $(0,1),(0,2),(1,2)$, together with their symmetric counterparts.

What makes the situation fascinating is that there does not seem to be a clear pattern how to arrange the walks of length $2$. 
In order to get to (geometrically) close points, one sometimes has to take relatively long steps back and forth. For example the walk from $(0,0)$ to $(2,4)$ we saw in the proof of Theorem~\ref{thm:n2n} is actually the shortest possible with both steps having the length of $85$.

The shortest two-step path we found from $(0,0)$ to $(2111,569)$ (a trip to geometric distance of about $2186.3$) is
\[
(0,0) \xrightarrow{(-50643549,196449668)}(-50643549,196449668) \xrightarrow{(50645660,-196449099)}(2111,569)\,,
\]
with the steps having the lengths of $202,\!872,\!475$ and $202,\!872,\!451$, respectively.

The existence of such extreme examples suggests there is no simple general formula to deduce $x$ and $y$ from $g$ and $h$ for the path
\[
	(0,0)\to(x,y)\to(g,h)\,.
\]

Under our current knowledge, it is of course  also possible that there are some faraway nodes $(g,h)$ such that no $\leq2$-step path from $(0,0)$ exists. However, as we have found such paths for all the non-trivial cases $0\leq g,h<4000$, we consider such a possibility unlikely. 

Thus, let us conclude our study with the following conjecture.

\begin{conjecture}
	In graph $\G$, the only vertices at distance $3$ from $O(0,0)$ are $(1,0)$, $(2,0)$, $(2,1)$ and their symmetric counterparts.
\end{conjecture}

We have also posed this conjecture to several AI tools (ChatGPT 5.4 Thinking, Claude Sonnet 4.6 and Opus 4.7, Gemini 3 Thinking). None of them managed to make any significant progress in proving the key claim, i.e. that for all but the explicitly listed small cases, the distance between the vertices is $\leq2$.

Assuming a constructive proof exists, it should probably be split into a number of subcases similar to Theorem~\ref{thm:gh}. However, the existence of irregular examples like $(0,0)\to (2111,569)$ means that such a splitting may become very complex. Alternatively, a non-constructive proof is, in principle, conceivable, but the author has not yet managed to find one.

Thus the author encourages all the math enthusiasts to tackle this conjecture and improve upon his results!

\subsection*{Declaration of interest}

The author declares no conflicting interests.

\subsection*{Declaration of AI usage}

AI tool (ChatGPT 5.4 Thinking) has been used to perform the background study to find out if similar research is available in the literature. It was also used as a general computer algebra system to verify the algebraic transformations given in the proofs and as a programming helper to optimize the Pythagorean triples analysis scripts. Several AI tools (ChatGPT 5.4 Thinking, Claude Sonnet 4.6 and Opus 4.7, Gemini 3 Thinking) were used to attempt proving the central hypothesis of the paper, but all of them failed. 

\subsection*{Acknowledgments}

This paper has been supported by the Estonian Research Council under the Grant PRG2177.

\end{document}